\documentclass{amsart}[12pt]
\usepackage[utf8]{inputenc}
\usepackage{amsmath}
\usepackage{amsfonts}
\usepackage{amssymb}

\newtheorem{theorem}{Theorem}
\newtheorem{lemma}{Lemma}
\newtheorem{proposition}{Proposition}

\theoremstyle{definition}

\newtheorem{remark}{Remark}

\begin{document}
\title[Volterra type operator]%
{Strict singularity of Volterra type operators on Hardy spaces}

\author{Qingze Lin, Junming Liu*, Yutian Wu}
\thanks{*Corresponding author}

\address{School of Applied Mathematics, Guangdong University of Technology, Guangzhou, Guangdong, 510520, P.~R.~China}\email{gdlqz@e.gzhu.edu.cn}

\address{School of Applied Mathematics, Guangdong University of Technology, Guangzhou, Guangdong, 510520, P.~R.~China}\email{jmliu@gdut.edu.cn}

\address{School of Financial Mathematics \& Statistics, Guangdong University of Finance, Guangzhou, Guangdong, 510521, P.~R.~China}\email{26-080@gduf.edu.cn}

\begin{abstract}
In this paper, we first characterize the boundedness and compactness of Volterra type operator $S_gf(z) = \int_0^z f'(\zeta)g(\zeta)d\zeta, \ z \in \mathbb{D},$ defined on Hardy spaces $H^p, \, 0< p <\infty$. The spectrum of $S_g$ is also obtained. Then we prove that $S_g$ fixes an isomorphic copy of $\ell^p$ and an isomorphic copy of $\ell^2$ if the operator $S_g$ is not compact on $H^p (1\leq p<\infty)$. In particular, this implies that the strict singularity of the operator $S_g$ coincides with the compactness of the operator $S_g$ on $H^p$. At last, we post an open question for further study.
\end{abstract}
\keywords{Volterra type operator, compactness, strict singularity, Hardy space} \subjclass[2010]{47G10, 30H10}
\thanks{This work was supported by NNSF of China (Grant No. 11801094).}

\maketitle

\section{\bf Introduction}
Let $\mathbb{D}$ be the unit disk of the complex plane $\mathbb{C}$ and $H(\mathbb D)$ the space consisting of all analytic functions on $\mathbb{D}$. Then for $0<p<\infty$, the Hardy space $H^{p}$ on $\mathbb{D}$ consists of all analytic functions $f\in H(\mathbb D)$ satisfying
$$H^p:=\{f\in H(\mathbb{D} ):\ \|f\|_{H^{p}}=\left(\lim_{ r\to1^-}\int_{\partial\mathbb{D}}|f(r\xi)|^{p}dm(\xi)\right)^{1/p}<\infty\}\,,$$
where $m$ is the normalized Lebesgue measure on $\partial\mathbb{D}$. By \cite[Theorem~2.6]{DUREN}, this norm is equal to the following norm:
$$H^p:=\{f\in H(\mathbb{D} ):\ \|f\|_{H^{p}}=\left(\int_{\partial\mathbb{D}}|f(\xi)|^{p}dm(\xi)\right)^{1/p}<\infty\}\,,$$
where for any $\xi\in\partial\mathbb{D}$, $f(\xi)$ is the radial limit which exists almost everywhere (see \cite[Theorem~9.4]{zhu}).

When $p=\infty$, the space $H^{\infty}$ is defined by
$$H^{\infty}=\{f\in H(\mathbb D):\ \|f\|_{\infty}:=\ \sup_{z\in \mathbb D}\{|f(z)|\}<\infty\}\,.$$

For any analytic function $g\in H(\mathbb D)$, there are two kinds of Volterra type operators defined, respectively, by
$$(T_gf)(z)=\int_0^z f(\omega)g'(\omega)d\omega,\quad z\in\mathbb D, f\in H(\mathbb D)\,,$$
and
$$(S_gf)(z)=\int_0^z f'(\omega)g(\omega)d\omega,\quad z\in\mathbb D, f\in H(\mathbb D)\,.$$

The boundedness and compactness of these two operators on some spaces of analytic functions were extensively studied. Pommerenke \cite{P} firstly studied the boundedness of $T_g$ on Hardy-Hilbert space $H^2$. After his work, Aleman, Siskakis and Cima \cite{AC,AS} systematically studied the boundedness and compactness of $T_g$ on Hardy space $H^p$, in which they showed that $T_g$ is bounded (or compact) on $H^p,\ 0<p<\infty$, if and only if $g\in BMOA$ (or $g\in VMOA$). What's more, Aleman and Siskakis \cite{AS1} studied the boundedness and compactness of $T_g$ on Bergman spaces while Galanopoulos, Girela and Pel\'{a}ez \cite{GGP,GP} investigated the boundedness of $T_g$ and $S_g$ on Dirichlet type spaces and Xiao \cite{XJ} studied $T_g$ and $S_g$ on $Q_p$ spaces.

Recently, Lin, et al \cite{LIN} studied the boundedness of $T_g$ and $S_g$ acting on the derivative Hardy spaces $S^p$. For these operators on other spaces like Fock spaces and weighted Banach spaces, see \cite{AJS,OC,CPPR,LIN1,TM1,TM2,SSV} and the references therein.

A bounded operator $T\colon X \to Y$ between Banach spaces is strictly singular if its restriction to any infinite-dimensional closed subspace is not an isomorphism onto its image. This notion was introduced by Kato \cite{Kato}. The obvious example of strictly singular non-compact operators are inclusion mappings $i_{p,q} \colon \ell^p \hookrightarrow \ell^q,$ when $1 \le p < q <\infty.$

A bounded operator $T\colon X \to Y$ between Banach spaces is said to fix a copy of the given Banach space $E$ if there is a closed subspace $M\subset X$, linearly isomorphic to $E$, such that the restriction $T_{|M}$ defines an isomorphism from $M$ onto $T(M)$. The bounded operator $T\colon X \to Y$ is called $\ell^p$-singular if it does not fix any copy of $\ell^p$\,.

Miihkinen \cite{SM} studied the strict singularity of $T_g$ on Hardy space $H^p$ and showed that the strict singularity of $T_g$ coincides with its compactness on $H^p, \, 1 \leq p < \infty.$ whose main ideas come from the recent paper \cite{LNST} where the corresponding questions are investigated for composition operators.

Although the boundedness and compactness of the operator $T_g$ on $H^p$ had been studied, from the literature that we have looked at so far, the proofs of the boundedness and compactness for the operator $S_g$ on $H^p$ are still not been shown in detail, except for the case $p=2$ whose study seems to be elementary (see \cite{LS}). Thus, in this paper, We first characterize the boundedness and compactness of Volterra type operator $S_gf$ defined on Hardy spaces $H^p$ for $0 < p < \infty$\,. Base on the characterization of the boundedness for the operator $S_g$ on $H^p$, we are able to characterize the spectrum of $S_g$ on $H^p$, inspired by the idea in the papers \cite{OC,OCAP}. Then we prove that the bounded operator $S_g$ fixes an isomorphic copy of $\ell^p$ if the operator $S_g$ is not compact on $H^p$. In particular, this implies that the strict singularity of the operator $S_g$ coincides with the compactness of the operator $S_g$ on $H^p.$ Moreover, we show that $S_g$, when acting on $H^p (1\leq p<\infty)$, fixes an isomorphic copy of $\ell^2$.

In the last section, we post an open question for further study.

Our main results are as follows:

\begin{proposition}
\label{pro1}
Let $g \in H(\mathbb{D})$ and $0 < p <\infty.$ Then the operator $S_g \colon H^p \to H^p$ is bounded if and only if $g\in H^\infty$.
\end{proposition}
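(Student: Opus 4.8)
The plan is to establish the two implications separately, both resting on the elementary facts that $(S_gf)'=f'g$ and $(S_gf)(0)=0$; this is exactly why, in contrast to $T_g$, the operator $S_g$ is controlled by the \emph{pointwise size} of $g$ rather than by its oscillation.

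For sufficiency, assume $g\in H^\infty$. I would use the identity $S_gf=M_gf-f(0)g(0)-T_gf$, where $M_g$ is multiplication by $g$; it is checked by observing that both sides vanish at $0$ and have derivative $f'g$. Since the $H^p$ norm is the $L^p(\partial\mathbb D)$ norm of the boundary values, $\|M_gf\|_{H^p}\le\|g\|_\infty\|f\|_{H^p}$; and since $H^\infty\subset BMOA$, the Aleman--Siskakis--Cima theorem quoted in the introduction gives boundedness of $T_g$ on $H^p$. Hence $S_g$ is bounded. (An alternative route, avoiding $BMOA$ and uniform in $0<p<\infty$, is the Lusin area-integral description of $H^p$: as $(S_gf)(0)=0$ and the area function obeys $A(S_gf)\le\|g\|_\infty A(f)$ pointwise on $\partial\mathbb D$, one gets $\|S_gf\|_{H^p}\asymp\|A(S_gf)\|_{L^p}\le\|g\|_\infty\|A(f)\|_{L^p}\asymp\|g\|_\infty\|f\|_{H^p}$.)

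For necessity, I would test the bounded operator $S_g$ against the standard family $f_a(z)=\bigl((1-|a|^2)/(1-\bar az)^2\bigr)^{1/p}$, $a\in\mathbb D$, which has $\|f_a\|_{H^p}=1$ for every $a$ because the integrand defining the norm is the Poisson kernel at $a$. A direct differentiation gives $|f_a'(a)|=\tfrac{2|a|}{p}(1-|a|^2)^{-1/p-1}$, so for $|a|\ge\tfrac12$ the function $f_a$ attains, up to a constant, the largest derivative an $H^p$ function of unit norm can have at $a$. Combining $(S_gf_a)'(a)=f_a'(a)\,g(a)$ with the pointwise estimate $|h'(z)|\le C_p\|h\|_{H^p}(1-|z|^2)^{-1/p-1}$ (which follows from Cauchy's formula on the disc of radius $(1-|z|)/2$ centered at $z$ together with the growth bound for $H^p$ functions), applied to $h=S_gf_a$, yields
\[
\tfrac{2|a|}{p}(1-|a|^2)^{-1/p-1}|g(a)|\le C_p\|S_g\|_{H^p\to H^p}(1-|a|^2)^{-1/p-1},
\]
hence $|g(a)|\lesssim\|S_g\|_{H^p\to H^p}$ for all $|a|\ge\tfrac12$; together with boundedness of $g$ on the compact set $\{|a|\le\tfrac12\}$, this gives $g\in H^\infty$.

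I do not expect a serious obstacle: each direction is short once the standard machinery is assembled. The only points needing care are that every ingredient --- the boundary-value identity for the norm, the area-integral characterization, the pointwise derivative bound, and the normalization $\|f_a\|_{H^p}=1$ --- be invoked uniformly over the whole range $0<p<\infty$, including $p<1$ where $H^p$ is merely a quasi-Banach space, and that the splitting $S_gf=M_gf-f(0)g(0)-T_gf$ be legitimate on all of $H^p$; none of these causes real difficulty.
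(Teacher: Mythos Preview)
Your proposal is correct. The sufficiency direction---the identity $S_gf=M_gf-f(0)g(0)-T_gf$ together with the known boundedness of $M_g$ and $T_g$, and the area-integral alternative---is exactly what the paper does (the area-integral route appears as a remark after the proof). For necessity, however, you take a genuinely different and more elementary route. The paper uses the embedding $H^p\subset BMOA^{1+1/p}_p$, the test functions $f_a(z)=(1-|a|^2)^{2-1/p}(1-\bar az)^{-2}$, a M\"obius change of variable in the resulting area integral, and a sub-mean-value argument applied to an auxiliary analytic function to extract $|g(a)|$. You instead use the test functions $f_a(z)=\bigl((1-|a|^2)/(1-\bar az)^2\bigr)^{1/p}$ with exact unit norm, evaluate $(S_gf_a)'(a)=f_a'(a)\,g(a)$, and read off $|g(a)|$ directly from the standard pointwise derivative bound for $H^p$ functions. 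Your argument is shorter, avoids any auxiliary function space, and works uniformly for $0<p<\infty$ without caveat. What the paper's approach buys is that the same $BMOA$-type estimate and the same test functions are reused verbatim in the proof of Proposition~\ref{pro2} (compactness) and then again in Theorem~\ref{th1}; your argument would adapt to those results as well (e.g.\ compactness forces $\|S_gf_{a_n}\|_{H^p}\to0$, hence $|g(a_n)|\to0$), but the paper's choice is tailored to that later reuse.
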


\begin{proposition}
\label{pro2}
Let $g \in H(\mathbb{D})$ and $0 < p <\infty.$ Then the operator $S_g \colon H^p \to H^p$ is compact if and only if $g=0$.
\end{proposition}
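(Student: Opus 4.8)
The ``if'' direction is trivial, since $S_0=0$ is compact. For the converse the plan is to test $S_g$ against the monomials $e_n(z)=z^n$: these form a bounded sequence in $H^p$ (indeed $\|e_n\|_{H^p}=1$) that converges to $0$ uniformly on compact subsets of $\mathbb{D}$, and the idea is that $S_g e_n$ is, for large $n$, essentially $z^n g$, whose $H^p$-norm is exactly $\|g\|_{H^p}$; compactness of $S_g$ will then force $\|g\|_{H^p}=0$.

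\textbf{Step 1 (exact computation).} Writing $g(z)=\sum_{k\ge 0}b_k z^k$, a direct term-by-term integration gives
$$(S_g e_n)(z)=\sum_{k\ge 0}\frac{n}{n+k}\,b_k\,z^{n+k}=z^n\phi_n(z),\qquad \phi_n(z):=n\int_0^1 t^{n-1}g(tz)\,dt=\int_0^1 g\bigl(z\,s^{1/n}\bigr)\,ds .$$
Since $S_g$ is compact it is bounded, so $g\in H^\infty$ by Proposition~\ref{pro1}; hence $\|\phi_n\|_{H^\infty}\le\|g\|_{H^\infty}$ for all $n$, and because $z^n$ is inner we get $\|S_g e_n\|_{H^p}=\|z^n\phi_n\|_{H^p}=\|\phi_n\|_{H^p}$. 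By dominated convergence $\phi_n\to g$ pointwise on $\mathbb{D}$, and since $(\phi_n)$ is a locally uniformly bounded sequence of analytic functions, $\phi_n\to g$ uniformly on compact subsets of $\mathbb{D}$.

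\textbf{Step 2 (lower bound and compactness).} For fixed $r\in(0,1)$, monotonicity of $\rho\mapsto M_p(\rho,\phi_n)$ gives $\|\phi_n\|_{H^p}^p\ge\int_{\partial\mathbb{D}}|\phi_n(r\xi)|^p\,dm(\xi)$, and the right-hand side tends to $\int_{\partial\mathbb{D}}|g(r\xi)|^p\,dm(\xi)$ because $\phi_n\to g$ uniformly on the circle $\{|z|=r\}$; letting $r\to1^-$ yields $\liminf_n\|\phi_n\|_{H^p}\ge\|g\|_{H^p}$. On the other hand, $(e_n)$ lies in the unit ball of $H^p$, which is a bounded set, so compactness of $S_g$ provides a subsequence of $(S_g e_n)$ converging in $H^p$; since $H^p$-convergence forces pointwise convergence on $\mathbb{D}$ and $(S_g e_n)(w)=w^n\phi_n(w)\to 0$ for every $w\in\mathbb{D}$, the limit is $0$, so $\liminf_n\|\phi_n\|_{H^p}=\liminf_n\|S_g e_n\|_{H^p}=0$. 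Combining the two displays gives $\|g\|_{H^p}=0$, i.e. $g\equiv 0$.

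The argument is short, and its heart is the non-degenerate lower bound $\liminf_n\|S_g e_n\|_{H^p}\ge\|g\|_{H^p}$ in Step~2; the only point requiring a little care — the ``main obstacle'' such as it is — is that everything must be phrased so as to remain valid for the whole range $0<p<\infty$, in particular the facts $\|h_r\|_{H^p}\le\|h\|_{H^p}$ and that the unit ball of $H^p$ is a bounded set in the non-locally-convex regime $0<p<1$, so that ``$S_g$ compact'' still means ``$S_g$ maps the unit ball to a relatively compact set''. Notably, no duality, reflexivity, or weak-compactness input is used, which is why the proof is uniform in $p$.
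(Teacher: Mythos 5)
Your proof is correct, and it takes a genuinely different route from the paper. The paper recycles the machinery of its Proposition~\ref{pro1}: it uses the embedding $H^p\subset BMOA^{1+1/p}_p$ and the test functions $f_{a_n}$ (which tend to $0$ uniformly on compacta), so that compactness forces the area-integral quantity to vanish and hence $|g(a_n)|\to 0$ for every sequence $|a_n|\to 1$, giving $g\equiv 0$. You instead test against the monomials $z^n$, exploit the exact identity $S_g z^n=z^n\phi_n$ with $\phi_n$ an average of dilations of $g$, and play the elementary lower bound $\liminf_n\|S_g z^n\|_{H^p}\ge\|g\|_{H^p}$ (via monotone integral means, the inner factor $z^n$, and local uniform convergence $\phi_n\to g$) against the fact that compactness forces $S_g z^n\to 0$ in $H^p$ (pointwise limits identify the limit as $0$). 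Your argument uses only the statement of Proposition~\ref{pro1} (to get $g\in H^\infty$, needed for the domination and for $\|\phi_n\|_\infty\le\|g\|_\infty$) and avoids the Carleson-measure/$BMOA$ apparatus entirely, and you rightly note it survives the quasi-Banach range $0<p<1$. What the paper's approach buys in exchange is that its computation with the family $f_{a_n}$ is exactly what is reused later: the proof of Theorem~\ref{th1} starts from the quantitative consequence $\|S_g f_{a_n}\|_{H^p}\ge h>0$ for a non-compact $S_g$, so the paper's route to Proposition~\ref{pro2} doubles as the launching point for the $\ell^p$-copy construction, whereas your monomial test functions would not serve that later purpose as directly.
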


\begin{proposition}
\label{pro3}
Let $g \in H(\mathbb{D})$ and $0 < p <\infty.$ Then the spectrum of the bounded operator $S_g \colon H^p \to H^p$ is $\sigma(S_g)=\{0\}\cup\overline{g(\mathbb{D})}$.
\end{proposition}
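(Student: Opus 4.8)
The plan is to compute, for each $\lambda\in\mathbb{C}$, whether $S_g-\lambda I$ is invertible on $H^p$, handling three cases: $\lambda=0$, $\lambda\in\overline{g(\mathbb{D})}$, and $\lambda\notin\{0\}\cup\overline{g(\mathbb{D})}$. Since $S_g$ is bounded, Proposition~\ref{pro1} gives $g\in H^\infty$, so $\overline{g(\mathbb{D})}$ is compact; moreover if $g\equiv0$ then $S_g=0$ and $\overline{g(\mathbb{D})}=\{0\}$, so we may assume $g\not\equiv0$. The case $\lambda=0$ is immediate: $(S_gf)(0)=0$ for every $f$, so the range of $S_g$ is contained in the proper closed subspace $\{h\in H^p:h(0)=0\}$, whence $S_g$ is not surjective and $0\in\sigma(S_g)$.

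For $\overline{g(\mathbb{D})}\subseteq\sigma(S_g)$, the idea is to use that $\sigma(S_g)$ is closed (the Neumann series argument is available in the algebra of bounded operators on $H^p$), so it is enough to show $g(\mathbb{D})\subseteq\sigma(S_g)$. Fixing $a\in\mathbb{D}$ and putting $\lambda_0=g(a)$, one observes that if $(S_g-\lambda_0 I)f=h$ then differentiating and using $(S_gf)'=f'g$ gives $(g-\lambda_0)f'=h'$, so evaluation at $z=a$ forces $h'(a)=0$. Hence the range of $S_g-\lambda_0 I$ lies in $\{h\in H^p:h'(a)=0\}$, a proper closed subspace because the derivative point-evaluation $h\mapsto h'(a)$ is a nonzero bounded functional on $H^p$; so $S_g-\lambda_0 I$ is not surjective and $\lambda_0\in\sigma(S_g)$.

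The remaining inclusion is the only one requiring a construction. Suppose $\lambda\ne0$ and $\lambda\notin\overline{g(\mathbb{D})}$, and set $\delta=\mathrm{dist}(\lambda,\overline{g(\mathbb{D})})>0$, so $|g(z)-\lambda|\ge\delta$ on $\mathbb{D}$ and $1/(g-\lambda)\in H^\infty$. I would define
$$Rh(z)=-\frac{h(0)}{\lambda}+\int_0^z\frac{h'(\zeta)}{g(\zeta)-\lambda}\,d\zeta=-\frac{h(0)}{\lambda}+\bigl(S_{1/(g-\lambda)}h\bigr)(z)$$
and check that it is a two-sided inverse of $S_g-\lambda I$. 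That $R$ is a bounded operator on $H^p$ follows from Proposition~\ref{pro1} applied to the symbol $1/(g-\lambda)\in H^\infty$ together with the continuity of $h\mapsto h(0)$; that $R(S_g-\lambda I)=(S_g-\lambda I)R=I$ is a short computation using $(S_g\varphi)'=\varphi'g$, $(S_g\varphi)(0)=0$ and the fundamental theorem of calculus. Hence $\lambda\notin\sigma(S_g)$, and combining the three cases gives $\sigma(S_g)=\{0\}\cup\overline{g(\mathbb{D})}$.

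I do not anticipate a serious obstacle; the verification in the last step is routine. The one place that needs care is the definition of $R$: it makes sense as an $H(\mathbb{D})$-valued map exactly because $1/(g-\lambda)$ is analytic on $\mathbb{D}$, and it is bounded on $H^p$ exactly because $1/(g-\lambda)\in H^\infty$ — this is precisely where the hypothesis $\lambda\notin\overline{g(\mathbb{D})}$, rather than merely $\lambda\notin g(\mathbb{D})$, enters, and where Proposition~\ref{pro1} carries the load. A mild bookkeeping subtlety is that for $0<p<1$ the space $H^p$ is only an $F$-space, but the open mapping theorem and the Neumann series still apply, so the argument goes through verbatim.
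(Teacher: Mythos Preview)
Your argument is correct and follows the same overall architecture as the paper: show $0\in\sigma(S_g)$ via the vanishing at the origin, write down the explicit resolvent $R_\lambda h(z)=\text{const}+\int_0^z h'(\zeta)/(g(\zeta)-\lambda)\,d\zeta$, invoke Proposition~\ref{pro1} with symbol $1/(g-\lambda)\in H^\infty$ to get boundedness when $\lambda\notin\overline{g(\mathbb{D})}$, and then use closedness of the spectrum to pass from $g(\mathbb{D})$ to $\overline{g(\mathbb{D})}$.

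The one genuine difference is how you handle the inclusion $g(\mathbb{D})\subset\sigma(S_g)$. The paper argues that for $\lambda\in g(\mathbb{D})$ the resolvent formula has an unbounded symbol, hence $R_{\lambda,g}$ fails to be bounded on $H^p$; this is a bit delicate because when $g(a)=\lambda$ the integrand acquires a pole and the formula no longer defines a map $H(\mathbb{D})\to H(\mathbb{D})$, so one must argue carefully that no \emph{other} bounded inverse can exist. Your route sidesteps this entirely: differentiating $(S_g-\lambda_0 I)f=h$ gives $(g-\lambda_0)f'=h'$, forcing $h'(a)=0$ and exhibiting non-surjectivity directly via the continuous functional $h\mapsto h'(a)$. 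This is cleaner and avoids any discussion of the would-be inverse in the singular case; the paper's version is shorter to state but leans on the uniqueness claim in a regime where the formula itself breaks down.
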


\begin{theorem}
\label{th1}
Let $1 \leq p <\infty$ and suppose that $S_g \colon H^p \to H^p$ is bounded but not compact. Then the operator $S_g \colon H^p \to H^p$ fixes an isomorphic copy of $\ell^p$ and an isomorphic copy of $\ell^2$ as well. In particular, the operator $S_g$ is not strictly singular, that is, strict singularity of bounded operator $S_g$ coincides with its compactness.
\end{theorem}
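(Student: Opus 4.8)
Throughout, by Propositions~\ref{pro1} and~\ref{pro2} the hypothesis ``$S_g$ bounded, not compact'' is exactly ``$g\in H^\infty\setminus\{0\}$'', which I assume. The last assertion is then formal: compact operators are strictly singular, while an operator fixing a copy of an infinite-dimensional space is by definition not strictly singular; so it remains to produce the two copies. In both cases the scheme is the same: build a sequence $(x_n)$ in $H^p$ equivalent to the unit vector basis of the target space $\ell^r$ ($r=p$ or $2$), together with a bounded-below ``model'' operator $A$ on $\overline{\operatorname{span}}(x_n)$, such that $\|S_gx_n-Ax_n\|_{H^p}\to0$; then, choosing the free parameters so that $\sum_n\|S_gx_n-Ax_n\|_{H^p}$ is small, the principle of small perturbations of basic sequences forces $S_g$ to restrict to an isomorphism on $\overline{\operatorname{span}}(x_n)\cong\ell^r$.

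For the copy of $\ell^p$, pick (a.e.\ point works) a Lebesgue point $\zeta_0$ of $g^*$ with $a:=g^*(\zeta_0)\ne0$; rotating, take $\zeta_0=1$. For $r_n\uparrow1$ to be chosen set $u_n(z)=\bigl((1-r_n^2)/(1-r_nz)^2\bigr)^{1/p}$, so $\|u_n\|_{H^p}=1$, and recall the standard fact that for $(r_n)$ sufficiently lacunary $(u_n)$ is equivalent to the $\ell^p$-basis (the $u_n$ live essentially on disjoint Whitney arcs at $1$; cf.\ the analogous step in \cite{SM,LNST}). The model operator is $A=a\cdot\mathrm{Id}$. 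Since $S_gu_n-a\,u_n=S_{g-a}u_n-a\,u_n(0)$ with $a\,u_n(0)=a(1-r_n^2)^{1/p}\to0$, it suffices to show $\|S_hu_n\|_{H^p}\to0$ for $h:=g-a\in H^\infty$, $h^*(1)=0$. Using the classical area-integral norm $\|F\|_{H^p}^p\asymp|F(0)|^p+\int_{\mathbb D}|F'(z)|^p(1-|z|)^{p-1}\,dA(z)$ with $F=S_hu_n$ (so $F(0)=0$, $F'=h\,u_n'$), this reduces to $\int_{\mathbb D}|h(z)|^p|u_n'(z)|^p(1-|z|)^{p-1}\,dA(z)\to0$. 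Splitting $\mathbb D$ into the rings $R_j=\{\,2^j(1-r_n)\le|1-r_nz|<2^{j+1}(1-r_n)\,\}$, on which $|u_n'(z)|^p(1-|z|)^{p-1}\lesssim 2^{-3j}(1-r_n)^{-2}$ and which satisfy $R_j\subset B(1,2^{j+2}(1-r_n))\cap\mathbb D$, the $j$-th piece is $\lesssim 2^{-3j}(1-r_n)^{-2}\int_{B(1,2^{j+2}(1-r_n))\cap\mathbb D}|h|^p\,dA$. The key sub-lemma is
$$\int_{B(1,\rho)\cap\mathbb D}|h(z)|^p\,dA(z)=o(\rho^2)\qquad(\rho\to0^+),$$
proved by dominating $|h|^p$ by its Poisson integral, using $\int_{B(1,\rho)\cap\mathbb D}P_z(\xi)\,dA(z)\asymp\min\bigl(\rho,\ \rho^3|1-\xi|^{-2}\bigr)$, and that $1$ is a Lebesgue point of $|h^*|^p$ (automatic since it is one of $g^*$ and $|h^*|\le\|h\|_\infty$). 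Substituting, the $j$-th piece is $\lesssim 2^{-j}\psi\!\bigl(2^{j+2}(1-r_n)\bigr)$ with $\psi(\rho):=\rho^{-2}\int_{B(1,\rho)\cap\mathbb D}|h|^p\,dA$ bounded and $\to0$; summing in $j$ and letting $n\to\infty$ (dominated convergence in $j$) gives $\|S_hu_n\|_{H^p}\to0$.

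For the copy of $\ell^2$ no boundary point is needed. Take a fast-growing lacunary $(\lambda_k)\subset\mathbb N$; by Paley's theorem $(z^{\lambda_k})$ is normalized in $H^p$ and equivalent to the $\ell^2$-basis, with span $N$. A direct computation gives $S_g(z^{\lambda})=z^{\lambda}G_\lambda$ where $G_\lambda(z)=\lambda\int_0^1 t^{\lambda-1}g(tz)\,dt$; since $g(t\,\cdot\,)\to g$ in $H^p$ as $t\to1^-$ and $\lambda t^{\lambda-1}\,dt$ concentrates at $t=1$, one gets $\|G_\lambda-g\|_{H^p}\to0$ as $\lambda\to\infty$. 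As $|z^\lambda|\equiv1$ on $\partial\mathbb D$, this says $S_g(z^{\lambda_k})=M_g(z^{\lambda_k})+r_k$ with $\|r_k\|_{H^p}\to0$, so the model operator is $A=M_g$. It remains to check $M_g$ is bounded below on $N$: for lacunary $F=\sum c_kz^{\lambda_k}$ we need $\|gF\|_{H^p}\gtrsim\|F\|_{H^p}$. Since $g^*\ne0$ a.e., $E_\varepsilon:=\{|g^*|>\varepsilon\}$ has $|E_\varepsilon|\uparrow1$; fix $\varepsilon$ with $|E_\varepsilon^c|$ so small that the anti-concentration of lacunary series (comparable in all $L^q$-norms, so $\int_{E_\varepsilon^c}|F|^p\,dm\le|E_\varepsilon^c|^{1-p/q}\|F\|_{L^q}^p\lesssim|E_\varepsilon^c|^{1-p/q}\|F\|_{H^p}^p\le\tfrac12\|F\|_{H^p}^p$) gives $\|gF\|_{H^p}^p\ge\varepsilon^p\int_{E_\varepsilon}|F|^p\,dm\ge\tfrac12\varepsilon^p\|F\|_{H^p}^p$. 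Thus $M_g|_N$ is an isomorphism onto its image; choosing $(\lambda_k)$ so fast that $\sum_k\|r_k\|_{H^p}$ is tiny, $S_g|_N$ is a small perturbation of it, hence also bounded below, and $S_g$ fixes a copy of $\ell^2$.

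The main obstacle is the core estimate $\|S_{g-a}u_n\|_{H^p}\to0$ in the $\ell^p$ part, and inside it the sub-lemma $\int_{B(1,\rho)\cap\mathbb D}|g-a|^p\,dA=o(\rho^2)$: this is precisely where the purely boundary statement ``$1$ is a Lebesgue point of $g^*$'' must be converted into quantitative decay of $g-a$ inside $\mathbb D$ near $1$, strong enough to beat the large derivative of the sharply peaked $u_n$. By contrast the $\ell^2$ part is comparatively soft, its only non-formal input being the (classical) anti-concentration of lacunary series; the ``bounded below $+$ small perturbation'' steps at the end of both parts, and the $\ell^p$- (resp.\ $\ell^2$-) basis property of the test sequences, are routine.
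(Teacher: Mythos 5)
Your $\ell^p$-part has a genuine gap for $2<p<\infty$, located exactly at the step you rely on most: the asserted ``classical area-integral norm'' $\|F\|_{H^p}^p\asymp|F(0)|^p+\int_{\mathbb D}|F'(z)|^p(1-|z|)^{p-1}\,dA(z)$ is not an equivalence on $H^p$ unless $p=2$. The Littlewood--Paley inequalities are one-sided: for $2\le p<\infty$ one only has $\int_{\mathbb D}|F'|^p(1-|z|)^{p-1}dA\lesssim\|F\|_{H^p}^p$, and the reverse bound $\|F\|_{H^p}^p\lesssim|F(0)|^p+\int_{\mathbb D}|F'|^p(1-|z|)^{p-1}dA$ holds only for $0<p\le 2$; lacunary sums such as $F=\sum_{k\le K}z^{2^k}$, with $\|F\|_{H^p}^p\asymp K^{p/2}$ but area integral $\asymp K$, show that the direction you need fails when $p>2$. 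Consequently your core estimate $\|S_{g-a}u_n\|_{H^p}\to 0$ (and with it the whole $\ell^p$-copy construction) is justified only for $1\le p\le 2$. The reduction itself, the sub-lemma $\int_{B(1,\rho)\cap\mathbb D}|g-a|^p dA=o(\rho^2)$ via the Poisson majorant and the Lebesgue point, and the dyadic-ring summation are all fine; but for $p>2$ you would have to replace the invalid norm by the genuine square-function/tent characterization $\|F\|_{H^p}^p\asymp\int_{\partial\mathbb D}\bigl(\int_{\Gamma(\xi)}|F'|^2dA\bigr)^{p/2}dm(\xi)$ (or the Carleson-box norm in the paper's Remark), and then averaged smallness of $g-a$ at the single point $1$ no longer feeds in directly, since one must control $g-a$ on cones based at nearby boundary points — this is precisely where the harder tent-space work of \cite{SM,LNST} enters. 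Note that the paper sidesteps all of this: it runs a gliding-hump argument on boundary arcs $A_\varepsilon$, using only that the mass of $|S_gf_{a_n}|^p$ concentrates near the point $1$ (Lemma~\ref{le1}), with no Littlewood--Paley-type inequality at all, which is why it covers every $1\le p<\infty$ uniformly.

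Your $\ell^2$-part, by contrast, is correct for all $1\le p<\infty$ and genuinely different from the paper's: the paper deduces it abstractly from the identity $M_g=f(0)g(0)+T_g+S_g$ together with the cited facts that $T_g$ is $\ell^2$-singular and $M_g$ is not, whereas you construct the copy explicitly on a lacunary span via $S_g(z^\lambda)=z^\lambda G_\lambda$, $\|G_\lambda-g\|_{H^p}\to0$, and the anti-concentration lower bound for $M_g$; this is a nice self-contained alternative. But since the theorem also asserts a fixed copy of $\ell^p$, the gap above leaves the full statement unproved for $p>2$.
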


\section{Boundedness and Compactness of $S_g$ on $H^p$}
In this section, we provide a new proof for the conditions of boundedness and compactness of the operator $S_g$ on $H^p$ when $0<p<\infty$. Although the result for the boundedness can be deduced from \cite[Lemma~2.1(i)]{AJS}, we give our new proof which is not only useful to the proof of compactness of $S_g$ on $H^p$ and the proof of Theorem~\ref{th1}, but also of interest itself.

\begin{proof}[Proof of Proposition~\ref{pro1}]
Assume that $S_g \colon H^p \to H^p$ is bounded. From \cite{ZHAO}, we know that $H^p\subset BMOA^{1+1/p}_p$, where $BMOA^{1+1/p}_p$ is the space of analytic functions $f$ satisfying
$$\sup_{a\in\mathbb{D}}\int_{\mathbb{D}}|f'(z)|^p(1-|z|^2)^{p-1}(1-|\varphi_a(z)|^2)dA(z)<\infty$$
in which $\varphi_a(z)=\frac{a-z}{1-\bar{a}z}$ is the M\"{o}bius transformation on $\mathbb{D}$ and $A$ is the normalized Lebesgue measure on $\mathbb{D}$.
Hence, if $S_g \colon H^p \to H^p$ is bounded, then $S_g \colon H^p \to BMOA^{1+1/p}_p$ is also bounded. Therefore, for any $f\in H^p$, we have
$$\sup_{a\in\mathbb{D}}\int_{\mathbb{D}}|(S_gf)'(z)|^p(1-|z|^2)^{p-1}(1-|\varphi_a(z)|^2)dA(z)\leq C\|f\|_{H^p}^p\,.$$
It is easy to verify that for any $a\in\mathbb{D}$, the function
$$f_a(z)=\frac{(1-|a|^2)^{2-1/p}}{(1-\bar{a}z)^2}$$
is a unit vector in $H^p$. Thus,
$$\sup_{a\in\mathbb{D}}\int_{\mathbb{D}}|(S_gf_a)'(z)|^p(1-|z|^2)^{p-1}(1-|\varphi_a(z)|^2)dA(z)\leq C\|f_a\|_{H^p}^p\,,$$
or equivalently,
$$\sup_{a\in\mathbb{D}}\int_{\mathbb{D}}|\bar{a}|^p|g(z)|^p\frac{(1-|a|^2)^{2p-1}}{|1-\bar{a}z|^{3p}}(1-|z|^2)^{p-1}(1-|\varphi_a(z)|^2)dA(z)\leq C\,.$$
Then, let $z=\varphi_a(\omega)=\frac{a-\omega}{1-\bar{a}\omega}$ be a M\"{o}bius transformation on $\mathbb{D}$, we have
$$\sup_{a\in\mathbb{D}}\int_{\mathbb{D}}|\bar{a}|^p|g(\varphi_a(\omega))|^p\frac{(1-|a|^2)^{2p-1}}{|1-\bar{a}\varphi_a(\omega)|^{3p}}(1-|\varphi_a(\omega)|^2)^{p-1}(1-|\omega|^2)|\varphi'_a(\omega)|^2dA(z)\leq C\,.$$
Note that $|1-\bar{a}\varphi_a(\omega)|=(1-|a|^2)/|1-\bar{a}\omega|$ and $(1-|\varphi_a(\omega)|^2)=(1-|z|^2)||\varphi'_a(\omega)|$, we obtain
$$\sup_{a\in\mathbb{D}}\int_{\mathbb{D}}|\bar{a}|^p|g(\varphi_a(\omega))|^p|1-\bar{a}\omega|^{p-2}(1-|\omega|^2)^pdA(z)\leq C\,.$$

Now, consider the analytic function $G_a(\omega):=\bar{a}^pg(\varphi_a(\omega))^p(1-\bar{a}\omega)^{p-2}$, we get that
$$\sup_{a\in\mathbb{D}}|G_a(0)|=\sup_{a\in\mathbb{D}}|\bar{a}|^p|g(a)|^p\leq\sup_{a\in\mathbb{D}}\int_{\mathbb{D}}|\bar{a}|^p|g(\varphi_a(\omega))|^p|1-\bar{a}\omega|^{p-2}(1-|\omega|^2)^pdA(z)\leq C$$
which implies that
$$\sup_{|a|\rightarrow1^{-}}|g(a)|^p\leq C,$$ that is, $g\in H^\infty$.

Conversely, assume that $g\in H^\infty$, then by \cite{AS,ZCRZ}, the operator $T_g \colon H^p \to H^p$ and the multiplication operator $M_g \colon H^p \to H^p$ are both bounded. Therefore, it follows from the obvious equality $(M_gf)(z)-(M_gf)(0)=(T_gf)(z)+(S_gf)(z)$ that $S_g \colon H^p \to H^p$ is also bounded. Accordingly, the proof is complete.
\end{proof}

\begin{remark}
We note that the sufficiency of Proposition~\ref{pro1} can also be proven directly by using the following equivalent norms for $H^p$ (see \cite[p.~125]{AB}):
$$\|f\|^p_{H^p}\asymp\int_{\partial\mathbb{D}}\left(\int_{S(\xi)}|f^{(n)}(z)|^2(1-|z|^2)^{2n-2}dA(z)\right)^{p/2}dm(\xi)+\sum_{j=0}^{n-1}|f^{(j)}(0)|^p\,.$$
\end{remark}

\begin{proof}[Proof of Proposition~\ref{pro2}]
It is obvious that if $g=0$, then $S_g \colon H^p \to H^p$ is compact.

Conversely, if $S_g \colon H^p \to H^p$ is compact, $S_g \colon H^p \to BMOA^{1+1/p}_p$ is also compact. Since for any sequence $\{a_n\}_{n=1}^\infty$ such that $\lim_{n\rightarrow\infty}|a_n|=1$, $f_{a_n}$ converges to $0$ uniformly on compact subsets of $\mathbb{D}$, it holds that
$$\lim_{n\rightarrow\infty}\int_{\mathbb{D}}|(S_gf_{a_n})'(z)|^p(1-|z|^2)^{p-1}(1-|\varphi_{a_n}(z)|^2)dA(z)=0\,,$$
then similar to the arguments in the proof of Proposition~\ref{pro1}, we obtain
$$\lim_{n\rightarrow\infty}|g(a_n)|^p=0\,.$$
That is, $g=0$. Accordingly, the proof is complete.
\end{proof}

\section{The spectrum of $S_g$ on $H^p$}
In this section, we characterize the the spectrum of the bounded operator $S_g$ on $H^p$\,.

\begin{proof}[Proof of Proposition~\ref{pro3}]
Since for any $f\in S^p\setminus\{0\}$, the function $S_gf$ has a zero at $z=0$, it holds that $0\in\sigma(S_g)$.

Now, we assume that $\lambda\in \mathbb{C}\setminus\{0\}$. For any $h\in H(\mathbb{D})$, it is easy to show that the equation
$$f-\frac{1}{\lambda}S_gf=h$$
has the unique solution $f$ in $H(\mathbb{D})$ and the solution is
$$f(z)=R_{\lambda,g}h(z):=\int_{0}^z\frac{h'(\zeta)}{1-\frac{1}{\lambda}g(\zeta)}d\zeta+h(0)\,.$$
Therefore, the resolvent set $\rho(S_g)$ of the bounded operator $S_g$ consists precisely of all points $\lambda\in \mathbb{C}$ for which $R_{\lambda,g}$ is a bounded operator on $H^p$\,.

If $\lambda\in\mathbb{C}\setminus(\{0\}\cup\overline{g(\mathbb{D})})$, then $1-\frac{1}{\lambda}g(\zeta)$ is bounded away from $0$, that is, $\frac{1}{1-\frac{1}{\lambda}g(\zeta)}$ is bounded. Thus,
$$f=S_{(1-\frac{1}{\lambda}g)}h+h(0)\in H^p$$
by Proposition~\ref{pro1}, which implies that the operator $R_{\lambda,g}$ is a bounded operator on $H^p$\,. Accordingly, $\mathbb{C}\setminus(\{0\}\cup\overline{g(\mathbb{D})})\subset\rho(S_g)$, that is, $\sigma(S_g)\subset(\{0\}\cup\overline{g(\mathbb{D})})$\,.

Conversely, if $\lambda\in g(\mathbb{D})$ and $\lambda\neq0$, then $\frac{1}{1-\frac{1}{\lambda}g(\zeta)}$ is not bounded, which implies that the operator $R_{\lambda,g}$ is not bounded on $H^p$, hence we have $g(\mathbb{D})\setminus\{0\}\subset\sigma(S_g)$\,. Thus, in conjunction with the fact that $0\in\sigma(S_g)$, it holds that
$$g(\mathbb{D})\cup\{0\}\subset\sigma(S_g)\subset\overline{g(\mathbb{D})}\cup\{0\}\,.$$
Since the spectrum $\sigma(S_g)$ is closed, we obtain that $\sigma(S_g)=\overline{g(\mathbb{D})}\cup\{0\}$\,.
\end{proof}

\section{Proof of Theorem~\ref{th1}}
First, we note that Theorem~\ref{th1} holds for $p=2$ due to the fact that a bounded linear operator on $H^2$ is compact if and only if it is strict singular, if and only if it does not fix any copy of $\ell^2$ (see \cite[5.1-5.2]{AP})\,. From now on, we suppose that $p\neq 2$\,.

From the proof in Proposition~\ref{pro2}, it can be easily checked that, if the bounded operator $S_g \colon H^p \to H^p$ is not compact, then there exists a sequence $(a_n) \subset \mathbb{D}$ with $0 < |a_1| < |a_2| < \ldots < 1$ and $a_n \to \omega \in \partial\mathbb{D},$ such that there is a positive constant $h$ such that
$$\|S_g(f_{a_n})\|_{H^p}\geq h>0$$
holds for all $n\in\mathbb{N}$ and $f_{a_n}$ defined in the previous section. We may assume without loss of generality that $a_n \rightarrow1$ as $n\rightarrow\infty$ by utilizing a suitable rotation.

\begin{lemma}
\label{le1}
Let $(a_n) \subset \mathbb{D}$ be a sequence as above. Let $A_\varepsilon = \{e^{i\theta}: |e^{i\theta}-1| < \varepsilon\}$ for each $\varepsilon > 0$. Then for bounded operator $S_g \colon H^p \to H^p$, we have
\begin{align*}
&\textrm{(1) $\lim_{\varepsilon \to 0}\int_{A_\varepsilon}|S_g f_{a_n}|^p dm = 0$\quad for every $n\in\mathbb{N}$.}&\\
&\textrm{(2) $\lim_{n \rightarrow \infty}\int_{\partial\mathbb{D}\setminus A_\varepsilon}|S_g f_{a_n}|^p dm = 0$\quad for every $\varepsilon > 0.$}&
\end{align*}
\end{lemma}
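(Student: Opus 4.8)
The plan is to prove both statements by reducing them to a domination/absolute-continuity argument for the Lebesgue integrals $\int |S_g f_{a_n}|^p\,dm$ on the boundary circle, exploiting two facts: (i) each $S_g f_{a_n}$ lies in $H^p$, so $|S_g f_{a_n}|^p$ is an $L^1(\partial\mathbb{D})$ function, and (ii) the sequence $(f_{a_n})$ is norm-bounded in $H^p$ and converges to $0$ uniformly on compact subsets of $\mathbb{D}$, hence, after applying the bounded operator $S_g$, the sequence $(S_g f_{a_n})$ is norm-bounded in $H^p$ and also tends to $0$ uniformly on compact subsets (since $S_g f_{a_n}(z)=\int_0^z f_{a_n}'(\zeta)g(\zeta)\,d\zeta$ and $f_{a_n}'\to 0$ locally uniformly while $g$ is bounded by Proposition~\ref{pro1}).

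For part (1), I would fix $n$ and simply observe that $|S_g f_{a_n}|^p\in L^1(\partial\mathbb{D},dm)$; since $m(A_\varepsilon)\to 0$ as $\varepsilon\to 0$, absolute continuity of the integral of an $L^1$ function gives $\int_{A_\varepsilon}|S_g f_{a_n}|^p\,dm\to 0$. This is essentially immediate and is not the obstacle.

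For part (2), fix $\varepsilon>0$ and write $K=\partial\mathbb{D}\setminus A_\varepsilon$, an arc bounded away from the limit point $1$. The idea is that on such an arc the boundary values of $S_g f_{a_n}$ can be controlled by interior estimates: since $a_n\to 1$, the test functions $f_{a_n}(z)=(1-|a_n|^2)^{2-1/p}(1-\bar a_n z)^{-2}$ and their derivatives decay uniformly on a neighborhood in $\overline{\mathbb{D}}$ of the closed arc $K$ (the denominators $|1-\bar a_n z|$ stay bounded below there), so $(S_g f_{a_n})$ tends to $0$ uniformly on a neighborhood of $K$ in $\overline{\mathbb{D}}$; integrating over $K$ then gives statement (2). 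The technical point to handle carefully is passing from ``$f_{a_n}'\to 0$ and is uniformly small near $K$ inside $\mathbb{D}$'' to ``$S_g f_{a_n}\to 0$ uniformly on the boundary arc $K$'': one should estimate $S_g f_{a_n}(r\xi)$ for $\xi\in K$ by integrating $f_{a_n}'g$ along a path from $0$ to $r\xi$, splitting the path into a piece near the origin (where $f_{a_n}'\to 0$ uniformly on compacta) and a piece staying in a Stolz-type region approaching $K$ (where $f_{a_n}'$ is uniformly small because $|1-\bar a_n z|$ is bounded below), then let $r\to 1^-$ and use that $H^p$ boundary values are radial limits. The main obstacle is thus the uniform control of $S_g f_{a_n}$ up to the boundary arc $K$; once that is in hand, both parts follow by elementary measure theory, and the two statements together will be combined in the proof of Theorem~\ref{th1} with the ``small perturbation'' / gliding-hump technique to extract a subsequence of $(f_{a_n})$ whose images under $S_g$ behave like the unit vector basis of $\ell^p$.
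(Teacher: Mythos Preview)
Your proposal is correct and follows essentially the same approach as the paper. For part~(1) you and the paper both invoke absolute continuity of the integral of the $L^1$ function $|S_g f_{a_n}|^p$. For part~(2) the paper also estimates $S_g f_{a_n}(\xi)$ by integrating $f_{a_n}' g$ along the radius $[0,\xi]$ for $\xi\in K=\partial\mathbb{D}\setminus A_\varepsilon$; the only simplification you are missing is that no path-splitting is needed: since $a_n\to 1$ and $K$ is a closed arc bounded away from $1$, a compactness argument gives a single $\gamma>0$ with $|1-\bar a_n r\xi|\ge\gamma$ for \emph{all} $n$, all $r\in[0,1)$, and all $\xi\in K$ (the denominator cannot vanish anywhere on the radial segment, including near the origin), so
\[
|S_g f_{a_n}(\xi)|^p\le \|g\|_\infty^p\Bigl(\int_0^1|f_{a_n}'(r\xi)|\,dr\Bigr)^p\le \|g\|_\infty^p\,\frac{|a_n|^p(1-|a_n|^2)^{2p-1}}{\gamma^{3p}}\longrightarrow 0,
\]
uniformly in $\xi\in K$. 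Thus your Stolz-region/compact-set decomposition is correct but superfluous.
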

\begin{proof}
\textbf{(1)} For each fixed $n$, this follows immediately from the absolute continuity of Lebesgue measure and the boundedness of operator $S_g \colon H^p \to H^p$\,.

\textbf{(2)} For given $\varepsilon > 0$, it is easy to see that there is a positive $\gamma>0$ such that $|1-\bar{a}_nre^{i\vartheta}|\geq \gamma$ for all $n\in \mathbb{N}$, $0\leq r<1$ and $\varepsilon\leq \vartheta\leq\pi$\,. Therefore, for these $r$ and $\vartheta$, we get that
$$|f'_{a_n}(re^{i\vartheta})|^p=\frac{|\bar{a}_n|^p(1-|a|^2)^{2p-1}}{|1-\bar{a}_nre^{i\vartheta}|^{3p}}\leq\frac{|\bar{a}_n|^p(1-|a_n|^2)^{2p-1}}{\gamma^{3p}}\,,$$
for all $n\in \mathbb{N}$.
Then, for any $\xi\in\partial\mathbb{D}\setminus A_\varepsilon$, we have
\begin{equation}\begin{split}\nonumber
|(S_gf_{a_n})(\xi)|^p&=\left|\int_0^1f'_{a_n}(r\xi)g(r\xi)\xi dr\right|^p
\leq \left(\int_0^1|f'_{a_n}(r\xi)g(r\xi)|dr\right)^p\\
&\leq \|g\|^p_{\infty}\left(\int_0^1|f'_{a_n}(r\xi)|dr\right)^p
\leq \|g\|^p_{\infty}\frac{|\bar{a}_n|^p(1-|a_n|^2)^{2p-1}}{\gamma^{3p}}\,.
\end{split}\end{equation}
Accordingly,
$$\lim_{n \rightarrow \infty}\int_{\partial\mathbb{D}\setminus A_\varepsilon}|S_g f_{a_n}|^p dm\leq \lim_{n \rightarrow \infty}\|g\|^p_{\infty}\frac{|\bar{a}_n|^p(1-|a_n|^2)^{2p-1}}{\gamma^{3p}}= 0\,.$$
The proof is complete.
\end{proof}

Now, we are prepared to give a proof of Theorem~\ref{th1}.
\begin{proof}[Proof of Theorem~\ref{th1}]
First, as noted above, there exists a sequence $(a_n) \subset \mathbb{D}$ with $0 < |a_1| < |a_2| < \ldots < 1$ and $a_n \to 1,$ such that there is a positive constant $h$ such that
$\|S_g(f_{a_n})\|_{H^p}\geq h>0$ holds for all $n\in\mathbb{N}$\,.

Then by Lemma~\ref{le1} and induction method, we can find a decreasing  positive sequence $(\varepsilon_n)$ such that $A_{\varepsilon_1}=\partial\mathbb{D}$ and $\lim_{n\rightarrow\infty}\varepsilon_n=0$, and a subsequence $(b_n) \subset (a_n)$ such that the following three conditions hold:
\begin{eqnarray*}
&\textup{(1)}& \left(\int_{A_n} |S_g f_{b_k}|^p dm \right)^{1/p} < 4^{-n} \delta h, \quad k = 1,\ldots, n - 1; \\
&\textup{(2)}& \left(\int_{\partial\mathbb{D} \setminus A_n} |S_g f_{b_n}|^p dm \right)^{1/p} < 4^{-n} \delta h; \\
&\textup{(3)}& \left(\int_{A_n} |S_g f_{b_n}|^p dm \right)^{1/p} > \frac{h}{2}
\end{eqnarray*}
for every $n \in \mathbb{N},$ where $A_n = A_{\varepsilon_n}$ and $\delta > 0$ is a small constant whose value will be determined later.

Now we are ready to prove that $\|\sum_{j=1}^\infty c_jS_g(f_{b_j})\|_{H^p} \geq C \|(c_j)\|_{\ell^p},$ where the constant $C > 0$ may depend on $p.$

\begin{equation}\begin{split}\nonumber
&\phantom{=}\|\sum_{j=1}^\infty c_jS_g(f_{b_j})\|_{H^p}^p=\sum_{n = 1}^\infty \int_{A_n \setminus A_{n+1}}\left|\sum_{j=1}^\infty c_jS_g(f_{b_j})\right|^p dm\\
&\geq\sum_{n = 1}^\infty \left(|c_n| \left(\int_{A_n \setminus A_{n+1}}| S_g f_{b_n}|^p dm\right)^{1/p}
-\sum_{j \neq n}|c_j|\left(\int_{A_n \setminus A_{n+1}}| S_g f_{b_j}|^p dm\right)^{1/p} \right)^p\,.
\end{split}\end{equation}

Observe that for every $n \in \mathbb{N},$ we have
\begin{equation}\begin{split}\nonumber
\left(\int_{A_n \setminus A_{n+1}}| S_g f_{b_n}|^p dm\right)^{1/p}&=\left(\int_{A_n}| S_g f_{b_n}|^p dm-\int_{A_{n+1}}| S_g f_{b_n}|^p dm\right)^{1/p}\\
&\geq \left(\left(\frac{h}{2}\right)^p-\left(4^{-n-1} \delta h\right)^p\right)^{1/p}\geq \frac{h}{2}-4^{-n-1} \delta h
\end{split}\end{equation}
according to conditions (1) and (3) above, where the last estimate holds for $1\leq p<\infty$.

Moreover, we have
$$\left( \int_{A_n \setminus A_{n+1}}|S_g(f_{b_j})|^p dm \right)^{1/p} \leq \left( \int_{A_n }|S_g(f_{b_j})|^p dm \right)^{1/p} < 4^{-n}\delta h$$ for $j < n$ by condition (1) and
$$\left( \int_{A_n \setminus A_{n+1}}|S_g f_{b_j}|^p dm \right)^{1/p} \le \left( \int_{\partial\mathbb{D} \setminus A_j }|S_g f_{b_j}|^p dm \right)^{1/p} < 4^{-j}\delta h$$ for $j > n$ by condition (2).

Thus it always holds that $$\left(\int_{A_n \setminus A_{n+1}}|S_g f_{b_j}|^p dm \right)^{1/p} < 2^{-n-j}\delta h\quad\text{ for } j \ne n.$$

Consequently, by the triangle inequality in $L^p$, we obtain that
\begin{equation}\begin{split}\nonumber
\|\sum_{j=1}^\infty c_jS_g(f_{b_j})\|_{H^p}&\geq\left(\sum_{n = 1}^\infty \left(|c_n| \left(\frac{h}{2}-4^{-n-1} \delta h\right)
-2^{-n}\delta h\|(c_j)\|_{\ell^p} \right)^p\right)^{1/p}\\
&\geq \left(\sum_{n = 1}^\infty \left(|c_n| \left(\frac{h}{2}\right)
-2^{-n+1}\delta h\|(c_j)\|_{\ell^p} \right)^p\right)^{1/p}\\
&\geq \frac{h}{2}\|(c_j)\|_{\ell^p}-\delta h\|(c_j)\|_{\ell^p} \left(\sum_{n = 1}^\infty2^{-(n-1)p}\right)^{1/p}\\
&\geq h\left(\frac{1}{2}-\delta\left(1-2^{-p}\right)^{-1/p}\right)\|(c_j)\|_{\ell^p}\geq C\|(c_j)\|_{\ell^p}\,,
\end{split}\end{equation}
where the last inequality holds when we choose $\delta$ small enough.

A straightforward variant of the above procedure also gives $$\|\sum_{j=1}^\infty c_jS_g(f_{b_j})\|_{H^p} \leq C_1 \|(c_j)\|_{\ell^p},$$ where the constant $C_1 > 0$ may depend on $p.$

By choosing $g=1$ and the fact that $\lim_{n\rightarrow\infty}f_{a_n}(0)=0$, we obtain that
$$C_2 \|(c_j)\|_{\ell^p}\leq \|\sum_{j=1}^\infty c_jf_{b_j}\|_{H^p} \leq C_3 \|(c_j)\|_{\ell^p}\,.$$

Thus, we have
$$\|\sum_{j=1}^\infty c_jS_g(f_{b_j})\|_{H^p} \geq C \|(c_j)\|_{\ell^p} \geq CC_3^{-1}\|\sum_{j=1}^\infty c_jf_{b_j}\|_{H^p}\,$$
This implies that the operator $S_g \colon H^p \to H^p$ fixes an isomorphic copy of $\ell^p$\,.

To prove that the operator $S_g \colon H^p \to H^p$ fixes an isomorphic copy of $\ell^2$, we consider the trivial equality
$$M_g(f)=f(0)g(0)+T_g(f)+S_g(f)\quad \text{ for } z\in \mathbb D\,, f\in H(\mathbb D),$$
where $M_g$ is the multiplication operator. It is proven that $M_g$ is not $l^2$-singular on $H^p$ while $T_g$ is $l^2$-singular on $H^p$ (see \cite{LMN,MNST}). Since the class of $l^2$-singular operators forms a linear subspace of the space of all bounded operators, it follows from the above equality that $S_g$ is not $l^2$-singular on $H^p$, that is, $S_g \colon H^p \to H^p$ fixes an isomorphic copy of $\ell^2$.
The proof is complete.
\end{proof}

\section{Open question}
In Theorem~\ref{th1}, we exclude the case of $p=\infty$ since our method is not applicable in this case. Although Bourgain \cite{JB} has established that a bounded linear operator on $H^\infty$ is weakly compact if and only if it does not fix any copy of $\ell^\infty$, we do not know whether or not $S_g$ acting on $H^\infty$ is compact if and only if it does not fix any copy of $\ell^\infty$. So we post it as an open question as follows:

{\it {\bf Open Question.} Suppose that $S_g \colon H^\infty \to H^\infty$ is bounded but not compact. Does the operator $S_g \colon H^\infty \to H^\infty$ fix an isomorphic copy of $\ell^\infty$? In particular, is the operator $S_g$ not strictly singular?

\end{document}